\def\ps@pprintTitle{%
  \let\@oddhead\@empty
  \let\@evenhead\@empty
  \def\@oddfoot{\reset@font\hfil\thepage\hfil}
  \let\@evenfoot\@oddfoot
}
\theoremstyle{plain}
\newtheorem{thm}{Theorem}[section]
\newtheorem{lemma}[thm]{Lemma}
\newcommand{\lemmaref}[1]{Lemma~\ref{lemma:#1}}
\newtheorem{cor}[thm]{Corollary}
\newcommand{\cororef}[1]{Corollary~\ref{cor:#1}}
\newtheorem{observation}[thm]{Observation}
\newcommand{\observationref}[1]{Observation~\ref{observation:#1}}
\theoremstyle{definition}
\newtheorem{defn}[thm]{Definition}
\newcommand{\defnref}[1]{Definition~\ref{defn:#1}}
\newtheorem{remark}[thm]{Remark}
\newtheorem{ex}[thm]{Example}
\newcommand{\exref}[1]{Example~\ref{ex:#1}}
\newcommand{\sectionref}[1]{Section~\ref{section:#1}}
\newcommand{\tuple}[1]{\bar{#1}}
\newcommand{\tuplefull}[1]{\tuple{#1}=(#1_m)_{m=1}^n}
\newcommand{\eqdef}{\triangleq}
\newcommand{\distinctpairs}[1]{#1^{\bar{2}}}
\newcommand{\apath}{\tuple{p}}
\newcommand{\apathfull}{\tuplefull{p}}
\newcommand{\paths}[1]{\mathcal{P}(#1)}
\newcommand{\timeset}{\mathbb{T}}
\newcommand{\timediffgroup}{\mathbb{D}}
\newcommand{\implspec}{(I,\delta)}
\newcommand{\timpl}{\mathsans{t}}
\newcommand{\dpaths}{\paths{G_{\delta}}}
\newcommand{\dlength}{L_{G_{\delta}}}
\newcommand{\dtaglength}{L_{G_{\delta'}}}
\newcommand{\dtagtaglength}{L_{G_{\delta''}}}
\begin{document}
\begin{frontmatter}

\title{Satisfiability and Canonisation of \mbox{Timely Constraints}}
\author[yannai]{Yannai A. Gonczarowski}
\address[yannai]{Einstein Institute of Mathematics,
Hebrew University of Jerusalem, Israel
yannai@gonch.name}



\message{\abstractname}
\pdfbookmark[1]{\abstractname}{abstract}
\begin{abstract}
We abstractly formulate an analytic problem that arises naturally
in the study of coordination in multi-agent systems.
Let $I$ be a set of arbitrary cardinality (the set of \emph{actions})
and assume that for each pair of distinct actions $(i,j)$, we are given a number
$\delta(i,j)$.  We say that a function $\timpl$, specifying a time for each
action, \emph{satisfies} the \emph{timely constraint} $\delta$ if for every
pair of distinct actions
$(i,j)$, we have $\timpl(j)-\timpl(i) \le \delta(i,j)$ (and thus also $\timpl(j)-\timpl(i) \ge -\delta(j,i)$).
While the approach that first comes to mind for analysing these definitions
is an analytic/geometric one,
it turns out that graph-theoretic tools yield powerful results when applied
to these definitions.
Using such tools, we characterise the set of satisfiable
timely constraints, and reduce the problem of satisfiability of a timely
constraint to the all-pairs shortest-path problem, and for finite $I$,
furthermore to the negative-cycle detection problem.
Moreover, we constructively show
that every satisfiable timely constraint has a \emph{minimal} satisfying
function --- a key milestone on the way to optimally solving a large class of
coordination problems --- and reduce the problem of finding this minimal
satisfying function, as well as the problems of classifying and comparing
timely constraints, to the all-pairs shortest-path problem.
At the heart of our analysis lies the constructive definition of a ``nicely-behaved''
representative $\hat\delta$
for each class of timely constraints sharing the same set of satisfying
functions.
We show that this canonical representative, as well as the
map from such canonical representatives to the the sets of functions satisfying
the classes of timely constraints they represent, has many desired properties,
which provide deep insights into the structure underlying the above
definitions.
\end{abstract}

\begin{keyword}
graph theory \sep distributed coordination \sep temporal coordination \sep
real-time constraints \sep real-time system specification \sep multi-agent systems

\end{keyword}

\end{frontmatter}


\section{Motivation and Definitions}
In a distributed algorithm, multiple processes, or agents, work toward
a common goal. More often than not, the actions of some agents are dependent
on the previous execution (if not also on the outcome) of the actions
of other agents. This, in turn, results in interdependencies between
the timings of the actions of the various agents. In this note,
we analyse such timing constraints in an abstract setting, and characterise
the satisfiability and the equivalence classes thereof. For a deeper look
into the motivation for the study in this note, the reader is referred to
\cite{halpern-moses-1990,fhmv-revisited,gomotark2013,bzm-wtr-ttr,mymsc}; for more information on the
application of the results described in this note,
the reader is referred to \cite{mymsc}.

\begin{defn}[Time]
Let $\timediffgroup \le \mathbb{R}$ be an additive subgroup of the real numbers that is
closed under
the infimum operation on bounded nonempty subsets.
We model \emph{time} as the nonnegative part of this group:
$\timeset \eqdef \{ t \in \timediffgroup \mid t \ge 0 \}$.
\end{defn}

\begin{remark}
The reader may verify that $\timediffgroup$ is either $\mathbb{R}$ (corresponding
to continuous modelling of time),
or cyclic (corresponding to discrete modelling of time) and hence isomorphic to
$\mathbb{Z}$.
In turn, $\timeset$ is either $\mathbb{R}_{\ge0}$ (the nonnegative real numbers),
or isomorphic to
$\mathbb{N}\cup\{0\}$,
respectively.
\end{remark}

\begin{defn}[Time Difference Bounds]
We define $\Delta = \timediffgroup \cup \{-\infty, \infty\}$.
\end{defn}

We now turn to model the timely constraints imposed on the actions of the
various agents.

\begin{defn}
Let $I$ be a set.
We denote the set of ordered pairs of distinct elements of $I$ by
$\distinctpairs{I} \eqdef \bigl\{(i,j) \in I^2 \mid i \ne j\bigr\}$.
\end{defn}

\begin{defn}[Satisfiability of Timely Constraints]\label{defn:satisfiability}
\leavevmode
\begin{enumerate}
\item
We call a pair $\implspec$ a \emph{timely specification}
if $I$ is a set (of arbitrary cardinality)
and if $\delta$ is a function
$\delta:\distinctpairs{I} \rightarrow \Delta$. We call $I$ the set of
\emph{actions} and call $\delta$ a \emph{timely constraint}.
\item
Let $\implspec$ be a timely specification. We say that a function
$\timpl : I \rightarrow \timeset$, specifying a time for every action,
\emph{satisfies}
$\delta$ (i.e.\ satisfies $(I,\delta)$),
if $\timpl$ satisfies $\timpl(j) \le \timpl(i) + \delta(i,j)$
for every $(i,j) \in \distinctpairs{I}$.
We denote the set of all functions satisfying $\delta$ by $T(\delta)$.
If $T(\delta) \ne \emptyset$, we say that $\delta$ is \emph{satisfiable};
otherwise, we say that $\delta$ is \emph{unsatisfiable}.
\end{enumerate}
\end{defn}

\begin{remark}
Obviously, $\delta$ is unsatisfiable unless $\delta>-\infty$
(in every coordinate).
Nonetheless, we still allow $\delta$ to take on the value of $-\infty$ for
some or all pairs of actions, for
technical reasons that become apparent when we define a canonisation operation
on timely constraints in \sectionref{satisfiability}.
\end{remark}

\begin{observation}\label{observation:satisfiability-properties}
Let $\implspec$ be a timely specification.
By \defnref{satisfiability},
\begin{itemize}
\item
$-\delta(j,i) \le \timpl(j) - \timpl(i) \le \delta(i,j)$, for every
$\timpl \in T(\delta)$ and every $(i,j) \in \distinctpairs{I}$.

\item
Let $\timpl:I \rightarrow \timeset$. If $\timpl \in T(\delta)$,
then $\timpl+c \in T(\delta)$ as well, for every $c \in \timeset$, as well as
for every other $c \in \timediffgroup$ s.t. $\timpl+c \ge 0$.

\item
$T$ is order-preserving: Let $\delta':\distinctpairs{I} \rightarrow \Delta$.
If $\delta' \le \delta$, then $T(\delta') \subseteq T(\delta)$.
\end{itemize}
\end{observation}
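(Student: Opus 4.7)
The plan is to note that all three bullets unwind directly from the defining inequality $\timpl(j)\le\timpl(i)+\delta(i,j)$ in \defnref{satisfiability}, with the only mild subtlety being a codomain check in the second bullet. I will handle the three items in order, keeping the algebra to one line each.

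For the first bullet, I would rearrange $\timpl(j)\le\timpl(i)+\delta(i,j)$ to obtain the upper bound $\timpl(j)-\timpl(i)\le\delta(i,j)$. Then I would apply the same defining inequality with the pair $(j,i)\in\distinctpairs{I}$ in place of $(i,j)$, yielding $\timpl(i)\le\timpl(j)+\delta(j,i)$, which on rearrangement gives the lower bound $\timpl(j)-\timpl(i)\ge-\delta(j,i)$.

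For the second bullet, I would observe that adding a constant $c\in\timediffgroup$ to $\timpl$ preserves the inequality, since $(\timpl+c)(j)=\timpl(j)+c\le\timpl(i)+\delta(i,j)+c=(\timpl+c)(i)+\delta(i,j)$. The only nontrivial point is that $\timpl+c$ must actually be a function into $\timeset$, i.e.\ nonnegative; this is automatic for $c\in\timeset$ (since $\timpl\ge 0$) and is exactly the stated side condition $\timpl+c\ge 0$ for general $c\in\timediffgroup$. This codomain check is the one place where I expect a reader to want a sentence of explicit justification, since the rest is a one-line substitution.

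For the third bullet, I would take any $\timpl\in T(\delta')$ and chain the two inequalities $\timpl(j)\le\timpl(i)+\delta'(i,j)\le\timpl(i)+\delta(i,j)$, where the second step uses $\delta'\le\delta$ (coordinatewise, interpreting the usual $\pm\infty$ conventions on $\Delta$ so that $a+(-\infty)=-\infty$ and $a+\infty=\infty$ whenever these arise). This shows $\timpl\in T(\delta)$, establishing $T(\delta')\subseteq T(\delta)$. None of the three arguments requires any graph-theoretic machinery; the observation is purely a restatement of \defnref{satisfiability} in a more usable form, and the proof is essentially bookkeeping.
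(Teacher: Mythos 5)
Your proof is correct and matches the paper's intent exactly: the paper states this observation without proof as an immediate unwinding of \defnref{satisfiability}, and your three one-line arguments (including the codomain check for $\timpl+c$ and the swap of $(i,j)$ with $(j,i)$ for the lower bound) are precisely that unwinding.
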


In the rest of this note, we embark on a graph-theoretic discussion
with the aim of analysing the above definitions.
In \sectionref{satisfiability},
we provide a more tangible characterisation for satisfiability of a timely
constraint,
define a ``nicely-behaved'' canonical representative $\hat{\delta}$ for
each set of timely constraints that share the same image under the mapping $T$,
and constructively build a minimal satisfying function for each such set.
In \sectionref{uniqueness}, we show that the quotient map
$\hat{\delta} \mapsto T(\delta)$,
for satisfiable timely constraints, is an order-embedding (i.e.\
it is both order-preserving and order-reflecting; thus it is also one-to-one),
and apply this result to further characterise the canonical form
$\hat{\delta}$.

\section{Satisfiability}\label{section:satisfiability}

As a first step toward analysing the satisfiability of a timely constraint,
we define a canonisation operation on timely constraints,
which preserves the set of satisfying functions.
In order to define the canonical form of a timely constraint $\delta$,
we consider $\delta$ as a weight function on the edges
of a directed graph on $I$.

\begin{defn}[Associated Graph]
Let $\implspec$ be a timely specification.
\begin{enumerate}
\item
We define the weighted directed graph of $\delta$ as
$G_{\delta} \eqdef (I,E_{\delta},\delta|_{E_{\delta}})$,
where the set of edges is defined as $E_{\delta} \eqdef \bigl\{(i,j) \in \distinctpairs{I} \mid \delta(i,j) < \infty\bigr\}$.
\item
We denote the set of paths in $G_{\delta}$ by $\dpaths$.
We denote the
length of a path $\apathfull \in \dpaths$ by
$\dlength(\apath) \eqdef \sum_{m=1}^{n-1} \delta(p_m, p_{m+1}) < \infty$.
\end{enumerate}
\end{defn}

\begin{defn}[Canonical Form]\label{defn:canonical-form}
Let $\implspec$ be a timely specification.
We define the \emph{canonical form} $\hat{\delta}$ of $\delta$ as
the distance function on $G_{\delta}$.
By slight abuse of notation,
we allow ourselves to write $\hat{\delta}$ instead of
$\hat{\delta}|_{\distinctpairs{I}}$ on some occasions below.
\end{defn}

\begin{observation}[Elementary Properties of the Canonical Form]\label{observation:canonical-form-properties}
Let $\implspec$ be a timely specification.
By \defnref{canonical-form}, we obtain the following properties
of $\hat{\delta}$:
\begin{itemize}

\item
$\forall i \in I: \hat{\delta}(i,i) \in \{0, -\infty\}$.
(Thus, by \observationref{satisfiability-properties}, for satisfiable
$\delta$ we obtain $\hat{\delta}|_{\{(i,i)\mid i \in I\}} \equiv 0$.)
Furthermore,
$\hat{\delta}(i,i)=-\infty$ iff $i$ is a vertex along a negative cycle in
$G_{\delta}$.

\item
Idempotence:
$\hat{\hat{\delta}}=\hat{\delta}$.

\item
Minimality\footnote{This name is justified in \sectionref{uniqueness}.}: $\hat{\delta} \le \delta$.

\item
Triangle inequality: $\forall i,j,k \in I:
\hat{\delta}(i,k) \le \hat{\delta}(i,j) + \hat{\delta}(j,k)$.

\item
Equivalence:
$T(\hat{\delta}) = T(\delta)$.
($\supseteq$:~by definition of $\hat{\delta}$.
$\subseteq$:~by minimality and by
\observationref{satisfiability-properties} (monotonicity of $T$).)

\item
Order preservation:
Let $\delta':\distinctpairs{I} \rightarrow \Delta$.
If $\delta' \le \delta$, then $\widehat{\delta'} \le \hat{\delta}$.

\end{itemize}
\end{observation}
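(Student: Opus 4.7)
My approach is to work directly from the definition of $\hat\delta(i,j)$ as the infimum of $\dlength(\apath)$ taken over paths $\apath \in \dpaths$ from $i$ to $j$, with the convention that the infimum is $+\infty$ if no such path exists and $-\infty$ if the set of path lengths is unbounded below. The hypothesis that $\timediffgroup$ is closed under infima of bounded nonempty subsets ensures that every such infimum lives in $\Delta$, so the definition makes sense even when $I$ is infinite.

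The main obstacle is the first item together with the negative-cycle characterisation, which is the only one that genuinely uses cycle structure rather than routine manipulation of path sums. I would argue as follows. The trivial one-vertex path $(i)$ has empty-sum length $0$, which yields the upper bound $\hat\delta(i,i)\le 0$ unconditionally. If some cycle $\apath$ through $i$ satisfies $\dlength(\apath)<0$, then concatenating $\apath$ with itself $N$ times produces $i$-to-$i$ paths of length $N\cdot \dlength(\apath)$, which tends to $-\infty$; hence $\hat\delta(i,i)=-\infty$. Conversely, if every cycle through $i$ is nonnegative, I would decompose any $i$-to-$i$ walk by repeatedly excising innermost simple cycles, each of nonnegative length, so the total length is nonnegative; combined with the upper bound this forces $\hat\delta(i,i)=0$.

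The remaining items are routine. For \emph{minimality}, when $(i,j)\in E_\delta$ the two-vertex path $(i,j)$ has length $\delta(i,j)$, giving $\hat\delta(i,j)\le \delta(i,j)$; otherwise $\delta(i,j)=\infty$ and the inequality is vacuous. For the \emph{triangle inequality}, concatenating any path from $i$ to $j$ with any path from $j$ to $k$ yields a path from $i$ to $k$ whose length is the sum of theirs, and passing to infima gives the desired bound. For \emph{idempotence}, $\widehat{\hat\delta}\le \hat\delta$ is just minimality applied to $\hat\delta$, while iterating the triangle inequality along any path $\apath \in \dhatpaths$ from $i$ to $j$ shows that its $\hat\delta$-weighted length is at least $\hat\delta(i,j)$, yielding $\widehat{\hat\delta}\ge \hat\delta$. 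For \emph{order preservation}, $\delta'\le\delta$ forces $E_\delta\subseteq E_{\delta'}$, so every path in $G_\delta$ is also a path in $G_{\delta'}$ of weakly smaller weight, which makes the infimum defining $\widehat{\delta'}$ no larger than the one defining $\hat\delta$. For \emph{equivalence}, I would follow the sketch the excerpt already indicates: the direction $T(\delta)\subseteq T(\hat\delta)$ is obtained by telescoping $\timpl(j)-\timpl(i)\le \sum \delta(p_m,p_{m+1})$ along an arbitrary path and then taking the infimum, while $T(\hat\delta)\subseteq T(\delta)$ follows immediately from minimality combined with the monotonicity of $T$ in \observationref{satisfiability-properties}.
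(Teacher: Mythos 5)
Your overall strategy---unfolding $\hat\delta(i,j)$ as the infimum of $\dlength(\apath)$ over paths from $i$ to $j$ and verifying each item by concatenation, telescoping, and passing to infima---is exactly the level of argument the paper intends (it states the observation without proof), and your treatments of minimality, the triangle inequality, idempotence, order preservation and equivalence are correct and match the paper's own brief indications for the equivalence item.

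The one step that does not hold up as written is the converse direction of the negative-cycle characterisation in the first item. From the hypothesis ``every cycle through $i$ is nonnegative'' you excise innermost simple cycles from an arbitrary $i$-to-$i$ path and declare each excised cycle nonnegative; but those excised simple cycles need not pass through $i$, so the hypothesis says nothing about them. If ``cycle'' is read as \emph{simple} cycle, the argument cannot be repaired, because the asserted equivalence itself fails: take $I=\{i,j,k\}$ with $\delta(i,j)=\delta(j,i)=5$, $\delta(j,k)=-10$, $\delta(k,j)=1$ and $\infty$ elsewhere; the paths $i\to j\to(k\to j)^m\to i$ have length $10-9m\to-\infty$, so $\hat\delta(i,i)=-\infty$, although the only simple cycle through $i$ has length $10$. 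The statement (and your proof) are fine once ``cycle through $i$'' is read in the same liberal sense as the paper's ``paths'' (which you, correctly, already treat as allowing repeated vertices), i.e.\ as a closed path through $i$: then ``negative cycle $\Rightarrow\hat\delta(i,i)=-\infty$'' is your repetition argument, and the converse needs no excision at all, since $\hat\delta(i,i)=-\infty<0$ already exhibits a closed path through $i$ of negative length. The same reading also gives the dichotomy directly: if no closed path through $i$ is negative, every $i$-to-$i$ path has length at least $0$, and the trivial path gives exactly $0$.
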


We are now ready to characterise the satisfiable timely constraints on
a set $I$. The first part of the following lemma
performs this task, while its second part constructively shows that for every
satisfiable $\delta$, there exists a satisfying function that is minimal
in every coordinate --- a result that is of essence in order to
optimally solve a large class of
naturally-occurring coordination \mbox{problems~\cite{halpern-moses-1990,fhmv-revisited,gomotark2013,bzm-wtr-ttr,mymsc}}.

\begin{lemma}[Satisfiability Criterion]\label{lemma:satisfiable-iff}
Let $\implspec$ be a timely specification.
\begin{enumerate}
\item
$\delta$ is satisfiable
iff $\hat{\delta}|_{\{i\} \times I}$ is bounded from below for each $i \in I$.
\item
If $\delta$ is satisfiable, then
$i \mapsto -\inf(\hat{\delta}|_{\{i\} \times I})$
satisfies $\delta$, and is minimal in every coordinate with regard to this
property.\footnote{A quick glance at the formulation of this minimal satisfying function may raise
a suspicion that perhaps it would have been more natural to define $\delta$
as the negation (in every coordinate) of the definition we have given.
While it is indeed possible to define $\delta$ this way, and while doing so
would have indeed given a more natural definition of the minimal
satisfying function, it would have also required us to work with greatest path
lengths instead of distances, with a reverse triangle inequality and with
order-reversing monotonicity, which may seem less natural.}
\end{enumerate}
\end{lemma}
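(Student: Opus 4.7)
The plan is to do everything after replacing $\delta$ by $\hat{\delta}$, which is legitimate by the Equivalence bullet of \observationref{canonical-form-properties} (so $T(\delta)=T(\hat\delta)$) and lets us freely use the triangle inequality, the values $\hat\delta(i,i)\in\{0,-\infty\}$, and $\hat\delta\le\delta$. For the forward direction of part~(1), I would pick any $\timpl\in T(\delta)=T(\hat\delta)$ and, for each fixed $i\in I$, use the first bullet of \observationref{satisfiability-properties} to get $\hat\delta(i,j)\ge \timpl(j)-\timpl(i)\ge -\timpl(i)$ for every $j\ne i$, while $\hat\delta(i,i)\in\{0,-\infty\}$ is forced to be $0$ because $\timpl(i)\in\timeset$ together with the previous inequality rules out a negative cycle through $i$; hence $\hat\delta|_{\{i\}\times I}$ is bounded from below by $-\timpl(i)$.

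For the reverse direction of part~(1) and for the first half of part~(2), I would define $\timpl(i)\eqdef-\inf\bigl(\hat\delta|_{\{i\}\times I}\bigr)$ and verify three things. First, $\timpl$ lands in $\timeset$: boundedness from below rules out $\hat\delta(i,i)=-\infty$, so $\hat\delta(i,i)=0$ forces $\inf\le 0$ and hence $\timpl(i)\ge 0$; moreover, the infimum is attained in $\timediffgroup$ by the closure assumption on $\timediffgroup$, so $\timpl(i)\in\timeset$. Second, $\timpl\in T(\hat\delta)=T(\delta)$: for every $(i,j)\in\distinctpairs{I}$ and every $k\in I$, the triangle inequality gives $\hat\delta(i,k)\le\hat\delta(i,j)+\hat\delta(j,k)$, so rearranging and taking the infimum over $k$ on both sides yields
\begin{equation*}
\inf_{k}\hat\delta(i,k)-\hat\delta(i,j)\;\le\;\inf_{k}\hat\delta(j,k),
\end{equation*}
which is exactly $\timpl(j)-\timpl(i)\le\hat\delta(i,j)$ after multiplying by $-1$; note that this derivation is well defined in $\Delta$ because the infima are finite, and the $k=i$ case of the inf already shows $\inf_k\hat\delta(i,k)\le\hat\delta(i,i)=0$.

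For the minimality claim of part~(2), I would take any competing $\timpl'\in T(\delta)=T(\hat\delta)$ and repeat the forward-direction bound: $\hat\delta(i,k)\ge\timpl'(k)-\timpl'(i)\ge-\timpl'(i)$ for every $k\in I$ (again with the $k=i$ case handled by $\hat\delta(i,i)=0\ge-\timpl'(i)$), so taking the infimum over $k$ gives $\inf_k\hat\delta(i,k)\ge-\timpl'(i)$, i.e.\ $\timpl(i)\le\timpl'(i)$, coordinatewise.

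I do not expect any real obstacle: the work is essentially bookkeeping around the triangle inequality and the diagonal value of $\hat\delta$. The only subtle points that I would want to state carefully are (a) why $\hat\delta(i,i)=0$ (rather than $-\infty$) under the hypothesis of part~(1)\textquotesingle{}s reverse direction, which uses the ``iff'' clause of the first bullet of \observationref{canonical-form-properties}, and (b) why $\inf_{k}\hat\delta(i,k)$ lives in $\timediffgroup$ rather than merely in $\mathbb{R}$, which is precisely the reason for the closure-under-infima hypothesis on $\timediffgroup$.
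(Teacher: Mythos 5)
Your proof is correct and follows essentially the same route as the paper: the forward direction of part~(1) and the minimality claim of part~(2) are argued exactly as in the paper, and your candidate function is the paper's own $i \mapsto -\inf\bigl(\hat{\delta}|_{\{i\}\times I}\bigr)$. The only cosmetic difference is that you verify satisfaction via the triangle inequality for $\hat{\delta}$ together with $\hat{\delta}\le\delta$, whereas the paper derives the same key inequality $\inf\bigl(\hat{\delta}|_{\{i\}\times I}\bigr) \le \delta(i,j) + \inf\bigl(\hat{\delta}|_{\{j\}\times I}\bigr)$ directly by prepending the edge $(i,j)$ to paths starting at $j$; your extra care about why the infimum lies in $\timediffgroup$ (closure under infima) is a detail the paper leaves implicit.
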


\begin{proof}
We first prove that if $\delta$ is satisfiable, then
for every $\timpl \in T(\delta)$ and for every $i \in I$, we have
$\timpl(i) \ge -\inf(\hat{\delta}|_{\{i\} \times I})$. This
implies the first direction (``$\Rightarrow$'') of the first part of the lemma,
and the minimality in the second part of the lemma.

Assume that $\delta$ is satisfiable and let $\timpl \in T(\delta)$.
By \observationref{canonical-form-properties} (equivalence),
$\timpl \in T(\hat{\delta})$ as well.
Let $i \in I$. By definition of satisfiability, we obtain
\[ \forall j \in I \setminus \{i\}: \hat{\delta}(i,j) \ge \timpl(j) - \timpl(i) \ge 0 - \timpl(i) = -\timpl(i). \]
By \observationref{canonical-form-properties}, $\hat{\delta}(i,i)=0\ge-\timpl(i)$.
Thus, we have $\hat{\delta}|_{\{i\}\times I} \ge -\timpl(i)$, and hence
$\inf(\hat{\delta}|_{\{i\}\times I}) \ge -\timpl(i)$, completing
this part of the proof.

We now prove that if $\hat\delta|_{\{i\} \times I}$ is bounded from below for
each $i \in I$, then the function defined in the second part of the lemma
indeed satisfies $\delta$. This completes the proof of both
parts of the lemma.

Define $\timpl:I\rightarrow\timeset$ by
$i \mapsto -\inf(\hat\delta|_{\{i\} \times I}) < \infty$.
For every $i \in I$, by \observationref{canonical-form-properties},
$\hat\delta(i,i)\le 0$, and therefore
$\timpl(i) \ge 0$. Thus, $\timpl$ is well defined. Let $(i,j) \in \distinctpairs{I}$ and
let $\apathfull \in \dpaths$ s.t.\ $p_1=j$. Define $p_0 \eqdef i$.
Note that
\[\inf(\hat\delta|_{\{i\}\times I}) \le \dlength\bigl((p_m)_{m=0}^n\bigr) =
\delta(i,j) + \dlength\bigl((p_m)_{m=1}^n\bigr).\]
By taking the infima of both sides over all $\apath \in \dpaths$ s.t.\
$p_1=j$, we obtain
$\inf(\hat\delta|_{\{i\} \times I}) \le
\delta(i,j) + \inf(\hat\delta|_{\{j\} \times I})$.
Thus, $\timpl(j) \le \timpl(i) + \delta(i,j)$, as required.
\end{proof}

\begin{ex}\label{ex:nonnegative-delta}
Let $(I,\delta)$ be a timely specification. If $\delta\ge 0$
(i.e.\ no lower bound is given on 
the proximity of any pair of actions), then
$\delta$ is satisfiable, and its minimal satisfying function is $\timpl\equiv 0$.
\end{ex}

For the case in which $I$ is finite, the first part of
\lemmaref{satisfiable-iff} yields the following, even more tangible, satisfiability
criterion.

\begin{cor}[Satisfiability Criterion --- Finite Case]\label{cor:satisfiable-iff-no-negative-cycles}
Let $\implspec$ be a timely specification s.t.\ $|I|<\infty$ and
$\delta>-\infty$.
$\delta$ is satisfiable iff $G_{\delta}$ contains no negative cycles.
\end{cor}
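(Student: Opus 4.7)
The plan is to derive this directly from part~1 of \lemmaref{satisfiable-iff}, which already says that $\delta$ is satisfiable iff $\hat\delta|_{\{i\}\times I}$ is bounded from below for every $i \in I$. What remains, under the hypotheses $|I|<\infty$ and $\delta>-\infty$, is to prove that this boundedness condition is equivalent to the absence of negative cycles in $G_{\delta}$.

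For the direction ``satisfiable $\Rightarrow$ no negative cycles'' I would appeal to \observationref{canonical-form-properties}: if $G_{\delta}$ contains a negative cycle passing through some $i\in I$, then $\hat{\delta}(i,i)=-\infty$, so $\hat\delta|_{\{i\}\times I}$ fails to be bounded from below, contradicting satisfiability via \lemmaref{satisfiable-iff}.

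For the converse direction, assume that $G_{\delta}$ contains no negative cycle, fix $i \in I$, and show that $\hat\delta|_{\{i\}\times I}$ is bounded from below. The key step is a standard cycle-excision argument: for any $j \in I$ and any $\apath \in \dpaths$ from $i$ to $j$, if $\apath$ visits some vertex twice, then the intermediate subpath is a cycle whose $\dlength$ is nonnegative (by assumption), so deleting it yields another $i\to j$ path in $\dpaths$ whose length is no greater than $\dlength(\apath)$. Iterating, one reaches a simple $i\to j$ path of length at most $\dlength(\apath)$. Since $I$ is finite and $\delta>-\infty$ guarantees that every edge weight in $G_{\delta}$ is a real number, there are only finitely many simple paths in $G_{\delta}$, each of finite length. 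Hence $\hat\delta(i,j)$ equals the minimum of this finite set of reals (or $+\infty$, if no $i\to j$ path exists), and in particular $\hat\delta|_{\{i\}\times I}$ is bounded from below. Applying \lemmaref{satisfiable-iff} gives satisfiability.

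I do not anticipate any real obstacle here; the only substantive point is making the cycle-excision argument precise enough to conclude that the infimum defining $\hat\delta$ is attained on simple paths, after which finiteness of $I$ closes the proof in one line.
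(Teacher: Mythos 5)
Your proof is correct and follows exactly the route the paper intends: the corollary is presented as an immediate consequence of part~1 of \lemmaref{satisfiable-iff}, and your argument --- using \observationref{canonical-form-properties} ($\hat{\delta}(i,i)=-\infty$ on a negative cycle) for one direction, and cycle excision plus finiteness of the set of simple paths (with all edge weights real since $\delta>-\infty$) for the other --- just fills in the details the paper leaves implicit. No gaps; the only cosmetic remark is that for the pair $(i,i)$ itself the same observation gives $\hat{\delta}(i,i)=0$ directly when there are no negative cycles.
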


We conclude this section by showing, by means of a simple example,
that the finiteness condition in
\cororef{satisfiable-iff-no-negative-cycles} cannot be dropped.

\begin{ex}
Set $I\eqdef\mathbb{N}$. Define $\delta:\distinctpairs{I}\to\Delta$ by
$\delta(1,n) \eqdef -n$ for every \mbox{$n \in \mathbb{N}\setminus\{1\}$}, and $\infty$
in all other coordinates.
It is easy to see that $\delta$ is unimplementable
(either directly: what would $\timpl(1)$ be?; or using \lemmaref{satisfiable-iff},
as $\delta|_{\{1\} \times I}$ is unbounded from below, and therefore neither is
$\hat\delta|_{\{1\} \times I}$),
even though $G_{\delta}$ contains no negative cycles. (In fact,
$G_{\delta}$ is a star, and thus contains no cycles at all.)
\end{ex}

\section{Uniqueness of the Canonical Form}\label{section:uniqueness}

We now prove a uniqueness property
one may expect from the canonical form defined above, namely that
the equivalence classes of satisfiable timely constraints,
under the equivalence
relation $\delta_1 \sim \delta_2 \Leftrightarrow T(\delta_1)=T(\delta_2)$,
are in one-to-one correspondence with canonical forms. Furthermore, we show
that the quotient map $\nicefrac{T}{\sim}$, mapping canonical forms 
(as representatives of equivalence classes) to sets of
satisfying functions,
is an order-embedding. We use these results to deduce additional, equivalent,
definitions for the canonical form,
each shedding a different light thereon.
At the heart of all the results in this section lies the following lemma,
constructively demonstrating that each coordinate $\hat{\delta}(i,j)$ of the canonical form $\hat{\delta}$
of a timely constraint $\delta$ captures the upper bound imposed by $\delta$
on $\timpl(j)-\timpl(i)$ in the tightest manner possible.

\begin{lemma}[Attainability of Canonical Constraints]\label{lemma:canonical-form-bounds-attained}
Let $\implspec$ be a timely specification s.t.\ $\delta$ is satisfiable,
and let $\tilde{\imath},\tilde{\jmath} \in I$.
\begin{enumerate}
\item
If $\hat{\delta}(\tilde{\imath},\tilde{\jmath})<\infty$, then
there exists $\timpl \in T(\delta)$ satisfying
\mbox{$\timpl(\tilde{\jmath})-\timpl(\tilde{\imath})=\hat{\delta}(\tilde{\imath},\tilde{\jmath})$}.
\item
If $\hat{\delta}(\tilde{\imath},\tilde{\jmath})=\infty$,
then for every $K \in \timeset$,
there exists $\timpl \in T(\delta)$ satisfying
$\timpl(\tilde{\jmath})-\timpl(\tilde{\imath}) \ge K$.
\end{enumerate}
\end{lemma}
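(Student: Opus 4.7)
The plan is to reduce both parts of the lemma to applications of \lemmaref{satisfiable-iff} on a suitably tightened auxiliary timely constraint. I define $\delta':\distinctpairs{I}\to\Delta$ to agree with $\delta$ on every coordinate except $(\tilde\jmath,\tilde\imath)$, at which I set
\[
\delta'(\tilde\jmath,\tilde\imath)\eqdef
\begin{cases}
-\hat\delta(\tilde\imath,\tilde\jmath), & \text{in Part~1,}\\
\min\bigl(\delta(\tilde\jmath,\tilde\imath),-K\bigr), & \text{in Part~2.}
\end{cases}
\]
In Part~1, the inequality $\delta(\tilde\jmath,\tilde\imath)\ge -\hat\delta(\tilde\imath,\tilde\jmath)$ follows from $\hat\delta(\tilde\jmath,\tilde\jmath)=0$ (by satisfiability, via \observationref{canonical-form-properties}) together with the triangle inequality $\hat\delta(\tilde\jmath,\tilde\jmath)\le\delta(\tilde\jmath,\tilde\imath)+\hat\delta(\tilde\imath,\tilde\jmath)$; hence $\delta'\le\delta$ in both parts, so by \observationref{canonical-form-properties} we have $T(\delta')\subseteq T(\delta)=T(\hat\delta)$. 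Any $\timpl\in T(\delta')$ then satisfies the new edge constraint $\timpl(\tilde\imath)-\timpl(\tilde\jmath)\le\delta'(\tilde\jmath,\tilde\imath)$, giving $\timpl(\tilde\jmath)-\timpl(\tilde\imath)\ge\hat\delta(\tilde\imath,\tilde\jmath)$ in Part~1 and $\ge K$ in Part~2; in Part~1 the matching upper bound (and hence equality) is immediate from $\timpl\in T(\hat\delta)$.

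The crux is thus to verify that $\delta'$ is satisfiable. By \lemmaref{satisfiable-iff}, it suffices to show that $\widehat{\delta'}|_{\{i\}\times I}$ is bounded from below for every $i\in I$. I will analyse an arbitrary walk $\apath$ in $G_{\delta'}$ from $i$ to some $j\in I$ by counting the number $k$ of its uses of the modified edge $(\tilde\jmath,\tilde\imath)$. Removing those $k$ occurrences splits $\apath$ into $k+1$ sub-walks lying entirely in $G_\delta$: a prefix from $i$ to $\tilde\jmath$, $k-1$ middle sub-walks each from $\tilde\imath$ to $\tilde\jmath$, and a suffix from $\tilde\imath$ to $j$. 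Since $\delta$'s satisfiability forbids negative cycles in $G_\delta$, every $G_\delta$-walk is bounded below in length by the corresponding $\hat\delta$-value, so for $k\ge1$,
\[
\dtaglength(\apath)\ge\hat\delta(i,\tilde\jmath)+(k-1)\hat\delta(\tilde\imath,\tilde\jmath)+\hat\delta(\tilde\imath,j)+k\cdot\delta'(\tilde\jmath,\tilde\imath),
\]
while for $k=0$ we simply have $\dtaglength(\apath)\ge\hat\delta(i,j)$.

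In Part~1, the choice $\delta'(\tilde\jmath,\tilde\imath)=-\hat\delta(\tilde\imath,\tilde\jmath)$ makes the $k$-dependent contributions telescope to the constant $-\hat\delta(\tilde\imath,\tilde\jmath)$, producing a lower bound independent of $k$; taking the infimum over $j$ yields a finite lower bound on $\widehat{\delta'}|_{\{i\}\times I}$, because $\inf_j\hat\delta(i,j)$ and $\inf_j\hat\delta(\tilde\imath,j)$ are both finite by the satisfiability of $\delta$ (\lemmaref{satisfiable-iff} again). In Part~2, the equality $\hat\delta(\tilde\imath,\tilde\jmath)=\infty$ means $G_\delta$ contains no $\tilde\imath$-to-$\tilde\jmath$ walk, so the middle sub-walks cannot exist and the decomposition forces $k\le1$; the bound is again finite by the same argument. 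The main obstacle I anticipate is the walk-decomposition bookkeeping --- in particular, consistently handling the edge cases where some of the $\hat\delta$-values involved are $\infty$, using the convention that walks requiring nonexistent sub-walks simply do not contribute to the infimum defining $\widehat{\delta'}$.
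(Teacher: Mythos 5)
Your argument is correct, but it follows a genuinely different route from the paper. The paper never adds a reverse edge: it replaces every $\infty$-coordinate of $\delta$ by a finite value $d_j$ (drawn from the lower bounds $-d_i\le\hat\delta|_{\{i\}\times I}$ supplied by \lemmaref{satisfiable-iff}), proves by induction on the number of ``new'' edges that $\widehat{\delta'}|_{\{i\}\times I}\ge -d_i$, and then exhibits the witness explicitly as $\timpl=d_{\tilde\imath}+\widehat{\delta'}(\tilde\imath,\cdot)$, whose triangle inequality makes it satisfying and whose value gap at $(\tilde\imath,\tilde\jmath)$ is exactly $\widehat{\delta'}(\tilde\imath,\tilde\jmath)$; the $\hat\delta(\tilde\imath,\tilde\jmath)=\infty$ case is then handled by inflating all the $d_i$ by $d=K-\widehat{\delta'}(\tilde\imath,\tilde\jmath)$ so that every path through a new edge gains at least $d$. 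You instead tighten the single coordinate $(\tilde\jmath,\tilde\imath)$ (to $-\hat\delta(\tilde\imath,\tilde\jmath)$, resp.\ $\min(\delta(\tilde\jmath,\tilde\imath),-K)$), so that \emph{every} element of $T(\delta')\subseteq T(\delta)=T(\hat\delta)$ automatically attains the bound, and you reduce everything to checking satisfiability of $\delta'$ via the criterion of \lemmaref{satisfiable-iff}, which you do by decomposing each $G_{\delta'}$-walk according to its uses of the modified edge; the telescoping in Part~1 and the forced $k\le 1$ in Part~2 (no $G_\delta$-walk from $\tilde\imath$ to $\tilde\jmath$) give lower bounds in terms of $\inf\hat\delta|_{\{i\}\times I}$, $\inf\hat\delta|_{\{\tilde\imath\}\times I}$ and $\hat\delta(i,\tilde\jmath)$, all finite by satisfiability of $\delta$; the justification $\delta'\le\delta$ in Part~1 via $0=\hat\delta(\tilde\jmath,\tilde\jmath)\le\hat\delta(\tilde\jmath,\tilde\imath)+\hat\delta(\tilde\imath,\tilde\jmath)$ is also sound. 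Your ``add the reverse constraint and check consistency'' scheme is conceptually clean and handles the $\infty$ case more uniformly than the paper's two-stage inflation, at the cost of the walk-decomposition bookkeeping and of yielding the witness only through the satisfiability criterion rather than as the paper's closed-form $d_{\tilde\imath}+\widehat{\delta'}(\tilde\imath,\cdot)$. Two small points to tidy up: your construction presupposes $(\tilde\jmath,\tilde\imath)\in\distinctpairs{I}$, so the (trivial) case $\tilde\imath=\tilde\jmath$, which the lemma's statement permits, should be dispatched separately (any $\timpl\in T(\delta)$ works since $\hat\delta(\tilde\imath,\tilde\imath)=0$); and you should note that $-\hat\delta(\tilde\imath,\tilde\jmath)\in\timediffgroup$, which follows from the closure of $\timediffgroup$ under infima of bounded nonempty subsets --- the same tacit step the paper takes when choosing the $d_i$.
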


\begin{proof}
By \lemmaref{satisfiable-iff}, $\forall i \in I: \exists d_i \in \timeset:
\hat{\delta}|_{\{i\} \times I} \ge -d_i$. (For the time being, we may choose
$(-d_i)_{i \in I}$ to be the infima of the respective restrictions of
$\hat{\delta}$.) We define
$\delta':\distinctpairs{I} \rightarrow \Delta \setminus \{\infty\}$ by
\[ \forall i,j \in \distinctpairs{I}: \delta'(i,j) =
\begin{cases}
\delta(i,j) &\delta(i,j) < \infty \\
d_j         &\delta(i,j) = \infty.
\end{cases} \]
As $\delta' \le \delta$, by
\observationref{satisfiability-properties} (monotonicity of $T$)
it is enough
to find $\timpl \in T(\delta')$ that meets the conditions of the lemma.
By \observationref{canonical-form-properties} (equivalence),
this is equivalent to finding $\timpl \in T\bigl(\widehat{\delta'}\bigr)$ that meets
the conditions of the lemma.

We start by showing that $\forall i \in I: \widehat{\delta'}|_{\{i\} \times I} \ge -d_i$.
Let $i \in I$ and let $\apathfull \in \paths{G_{\delta'}}$ s.t.\ $p_1 = i$.
Set $l=\Bigl|\bigl\{k \in [n-1] \mid \delta(p_k,p_{k+1}) = \infty\bigr\}\Bigr|$,
where
$[n-1]\eqdef\{1,\ldots,n-1\}$; $l$ is
the number of ``new'' edges in $\apath$, which do not exist in $G_{\delta}$.
We show, by induction on $l$, that $\dtaglength(\apath) \ge -d_i$.

Base: If $l=0$, then $\dtaglength(\apath) = \dlength(\apath) \ge -d_i$.

Induction step: Assume that $l \ge 1$. Let $k \in [n-1]$ be maximal such that
$(p_k,p_{k+1})$ is a ``new'' edge (i.e.\ $\delta(p_k,p_{k+1})=\infty$).
By definition of $\delta'$, we have $\delta'(p_k,p_{k+1})=d_{p_{k+1}}$.
Thus, by the induction hypothesis and by definition of $d_{p_{k+1}}$, we obtain
\begin{align*}
\dtaglength(\apath) =&\:
\dtaglength\bigl((p_m)_{m=1}^k\bigr) \:&+&\: \delta'(p_k,p_{k+1})
\:&+&\: \dlength\bigl((p_m)_{m=k+1}^n\bigr) \:&\ge&\: \\
\ge&\: -d_i \:&+&\: d_{p_{k+1}} \:&-&\: d_{p_{k+1}} \:&=&\: -d_i,
\end{align*}
and the proof by induction is complete. In particular, we conclude that \linebreak
$\widehat{\delta'} > -\infty$. (Recall that by definition, also
$\widehat{\delta'} \le \delta' < \infty$.)

We claim that
$\timpl \eqdef d_{\tilde{\imath}} + \widehat{\delta'}(\tilde{\imath},\cdot) \ge 0$
satisfies
$\widehat{\delta'}$ (and hence also satisfies $\delta'$ and $\delta$).
Indeed, by
\observationref{canonical-form-properties} (triangle inequality), for every $(j,k) \in \distinctpairs{I}$ we have
\[\timpl(k) = d_{\tilde{\imath}} + \widehat{\delta'}(\tilde{\imath},k) \le
d_{\tilde{\imath}} + \widehat{\delta'}(\tilde{\imath},j) +
\widehat{\delta'}(j,k) = \timpl(j) +
\widehat{\delta'}(j,k).\]

If $\hat{\delta}(\tilde{\imath},\tilde{\jmath})<\infty$, we define
$K \eqdef \hat{\delta}(\tilde{\imath},\tilde{\jmath})$; otherwise,
let $K \in \timeset$ be arbitrarily large as in the second part of the lemma.
As $\delta'$ is satisfiable, by \observationref{canonical-form-properties} we
obtain $\widehat{\delta'}(\tilde{\imath},\tilde{\imath})=0$.
Therefore,
\[\timpl(\tilde{\jmath}) - \timpl(\tilde{\imath}) =
\bigl(d_{\tilde{\imath}} + \widehat{\delta'}(\tilde{\imath},\tilde{\jmath})\bigr) -
\bigl(d_{\tilde{\imath}} + \widehat{\delta'}(\tilde{\imath},\tilde{\imath})\bigr) =
\widehat{\delta'}(\tilde{\imath},\tilde{\jmath}).\]
Thus, if $\widehat{\delta'}(\tilde{\imath},\tilde{\jmath})
\ge K$, then the proof is complete.
(For the case in which \linebreak
$\hat{\delta}(\tilde{\imath},\tilde{\jmath})<\infty$, we obtain $\widehat{\delta'}(\tilde{\imath},\tilde{\jmath})\le K$
by \observationref{canonical-form-properties} (monotonicity),
since $\delta' \le \delta$.)

Otherwise,
set $d \eqdef K - \widehat{\delta'}(\tilde{\imath},\tilde{\jmath}) >0$,
and for every $i \in I$ define $d_i' \eqdef d_i + d > d_i$.
Therefore, $-d_i' < -d_i \le \hat{\delta}|_{\{i\} \times I}$ for every $i \in I$.
Denote by $\delta''$ the function constructed from $\delta$
in the same way in which $\delta'$ was constructed from it,
but using the lower bounds $(-d_i')_{i \in I}$ rather than
$(-d_i)_{i \in I}$.
As explained above, in order to prove that $d_{\tilde{\imath}}'+\widehat{\delta''}(\tilde{\imath},\cdot)$ satisfies the conditions of the lemma,
it is enough to show that
$\widehat{\delta''}(\tilde{\imath},\tilde{\jmath}) \ge K$.
Let $\apathfull \in \paths{G_{\delta''}}$ s.t.\ $p_1 = \tilde{\imath}$ and
$p_n = \tilde{\jmath}$.
If $\forall k \in [n-1]: \delta(p_k, p_{k+1}) < \infty$, then
$\dtagtaglength(\apath) = \dlength(\apath) \ge \hat{\delta}(\tilde{\imath},\tilde{\jmath}) \ge K$.
Otherwise,
\begin{align*}
&\:\dtagtaglength(\apath) = & \hskip-1pt\text{by definitions of $\delta'$ and $\delta''$} \\
=&\:
\dtaglength(\apath) + d\cdot\Bigl|\bigl\{k \in [n-1] \mid \delta(p_k, p_{k+1}) = \infty \bigr\}\Bigr| \ge & \text{as this set is non-empty} \\
\ge&\: \dtaglength(\apath) + d \ge & \text{by definition of $\widehat{\delta'}$} \\
\ge&\: \widehat{\delta'}(\tilde{\imath},\tilde{\jmath}) + d = & \text{by definition of $K$} \\
=&\:  K.
\end{align*}
Either way, the proof is complete.
\end{proof}

While there exist unsatisfiable timely constraints whose canonical forms differ 
(due to $G_{\delta}$ not necessarily being
strongly connected and to $I$ no necessarily being finite), we now conclude, using
\lemmaref{canonical-form-bounds-attained}, that for satisfiable
timely constraints, the map $\hat{\delta}\mapsto T(\delta)$, from the canonical
form of a satisfiable timely constraint $\delta$ to the set of functions satisfying
$\delta$ (this map is well defined by \observationref{canonical-form-properties} ---
equivalence), is an order-embedding (and thus, in
particular, also one-to-one). This gives way to the use of the canonical form
as an efficient tool for classifying and sorting timely constraints according
to their ``strictness''.

\begin{cor}[$\hat{\delta} \mapsto T(\delta)$ is an Order-Embedding]\label{cor:canonical-form-order}
Let $I$ be a set and let
$\delta_1,\delta_2:\distinctpairs{I} \rightarrow \Delta$ s.t.\
$\delta_1$ is satisfiable.
$\hat{\delta}_1 \le \hat{\delta}_2$ iff $T(\delta_1) \subseteq T(\delta_2)$.
\end{cor}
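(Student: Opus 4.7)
My plan is to handle the two directions separately: the forward direction will be a short chain of inclusions already available from \observationref{canonical-form-properties}, while the reverse direction will be a case analysis powered by \lemmaref{canonical-form-bounds-attained}.

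For the forward implication, I would assume $\hat{\delta}_1 \le \hat{\delta}_2$ and chain equivalence with monotonicity to obtain
\[ T(\delta_1) = T(\hat{\delta}_1) \subseteq T(\hat{\delta}_2) = T(\delta_2), \]
where the outer equalities are the equivalence item of \observationref{canonical-form-properties} and the middle inclusion is monotonicity of $T$ from \observationref{satisfiability-properties}. (I am implicitly identifying $\hat{\delta}_i$ with its restriction to $\distinctpairs{I}$, as the paper explicitly permits; on the diagonal there is nothing to check for satisfiable $\delta_1$.)

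For the reverse implication, I would assume $T(\delta_1) \subseteq T(\delta_2)$ and note first that $T(\delta_1)$ is nonempty, so $\delta_2$ is satisfiable as well, and consequently both $\hat{\delta}_1$ and $\hat{\delta}_2$ vanish on the diagonal by \observationref{canonical-form-properties}. It then suffices to fix an arbitrary $(\tilde{\imath},\tilde{\jmath}) \in \distinctpairs{I}$ and split on whether $\hat{\delta}_1(\tilde{\imath},\tilde{\jmath})$ is finite. If it is, \lemmaref{canonical-form-bounds-attained}(1) supplies $\timpl \in T(\delta_1) \subseteq T(\delta_2) = T(\hat{\delta}_2)$ with $\timpl(\tilde{\jmath}) - \timpl(\tilde{\imath}) = \hat{\delta}_1(\tilde{\imath},\tilde{\jmath})$, and satisfaction of $\hat{\delta}_2$ by $\timpl$ then yields $\hat{\delta}_1(\tilde{\imath},\tilde{\jmath}) \le \hat{\delta}_2(\tilde{\imath},\tilde{\jmath})$. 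If instead $\hat{\delta}_1(\tilde{\imath},\tilde{\jmath}) = \infty$, \lemmaref{canonical-form-bounds-attained}(2) supplies, for every $K \in \timeset$, some $\timpl \in T(\delta_1) \subseteq T(\hat{\delta}_2)$ with $\timpl(\tilde{\jmath}) - \timpl(\tilde{\imath}) \ge K$, forcing $\hat{\delta}_2(\tilde{\imath},\tilde{\jmath}) \ge K$ for all $K \in \timeset$ and hence $\hat{\delta}_2(\tilde{\imath},\tilde{\jmath}) = \infty$, as required.

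I do not expect any real obstacle: \lemmaref{canonical-form-bounds-attained} was evidently designed exactly for this use, providing in each coordinate a satisfying function that witnesses the tightness of $\hat{\delta}_1(\tilde{\imath},\tilde{\jmath})$ (either exactly, or to any prescribed degree when the entry is $\infty$) --- precisely what is needed to convert a set-containment hypothesis on $T$ into a pointwise inequality on the canonical forms.
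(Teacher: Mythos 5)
Your proof is correct and takes essentially the same approach as the paper: the forward direction is the identical chain of equivalence and monotonicity, and your reverse direction is the paper's argument (the same case split on whether $\hat{\delta}_1(\tilde{\imath},\tilde{\jmath})$ is finite, powered by \lemmaref{canonical-form-bounds-attained}), merely phrased directly rather than by contraposition.
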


\begin{proof}
$\Rightarrow$:
Assume that $\hat{\delta}_1 \le \hat{\delta}_2$.
By \observationref{satisfiability-properties} (monotonicity of $T$)
and by
\observationref{canonical-form-properties} (equivalence), we have
$T(\delta_1) = T(\hat{\delta}_1) \subseteq T(\hat{\delta}_2) = T(\delta_2)$.

$\Leftarrow$:
Assume that $\hat{\delta}_1 \nleq \hat{\delta}_2$. Thus, there exist
$\tilde{\imath},\tilde{\jmath} \in I$ s.t.\
$\hat{\delta}_1(\tilde{\imath},\tilde{\jmath}) > \hat{\delta}_2(\tilde{\imath},\tilde{\jmath})$.
If $\hat{\delta}_1(\tilde{\imath},\tilde{\jmath})<\infty$, then
by \lemmaref{canonical-form-bounds-attained} there exists
$\timpl \in T(\delta_1)$ s.t.\ $\timpl(\tilde{\jmath})-\timpl(\tilde{\imath}) =
\hat{\delta}_1(\tilde{\imath},\tilde{\jmath}) > \hat{\delta}_2(\tilde{\imath},\tilde{\jmath})$,
and thus $\timpl \in T(\delta_1) \setminus T(\delta_2)$, and the proof is
complete.

If $\hat{\delta}_1(\tilde{\imath},\tilde{\jmath})=\infty$, then
$\hat{\delta}_2(\tilde{\imath},\tilde{\jmath})<\infty$ and thus there exists
$K \in \timeset$ s.t.\ \linebreak
$K > \hat{\delta}_2(\tilde{\imath},\tilde{\jmath})$.
Similarly to the proof of the
previous case, by \lemmaref{canonical-form-bounds-attained} there exists
$\timpl \in T(\delta_1)$ s.t.\ $\timpl(\tilde{\jmath})-\timpl(\tilde{\imath}) \ge K
> \hat{\delta}_2(\tilde{\imath},\tilde{\jmath})$.
Once again, we obtain that $\timpl \in T(\delta_1) \setminus T(\delta_2)$, and the
proof is complete.
\end{proof}

\begin{cor}[Uniqueness of the Canonical Form]\label{cor:canonical-form-uniqueness}
Let $I$ be a set and let
$\delta_1,\delta_2:\distinctpairs{I} \rightarrow \Delta$ s.t.\ at least
one of them is satisfiable.
$\hat{\delta}_1 = \hat{\delta}_2$ iff $T(\delta_1) = T(\delta_2)$.
\end{cor}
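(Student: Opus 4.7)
The plan is to deduce this as an immediate consequence of the equivalence property in \observationref{canonical-form-properties} together with the order-embedding of \cororef{canonical-form-order}, with only a small bookkeeping step to upgrade the ``at least one is satisfiable'' hypothesis to ``both are satisfiable'' in the nontrivial direction.

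For the forward direction ($\Rightarrow$), no satisfiability hypothesis is needed: by the equivalence item of \observationref{canonical-form-properties}, $T(\delta_k) = T(\hat{\delta}_k)$ for $k=1,2$, so $\hat{\delta}_1 = \hat{\delta}_2$ immediately yields $T(\delta_1) = T(\hat{\delta}_1) = T(\hat{\delta}_2) = T(\delta_2)$.

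For the reverse direction ($\Leftarrow$), I first note that if $T(\delta_1) = T(\delta_2)$ and (say) $\delta_1$ is satisfiable, then $T(\delta_2) = T(\delta_1) \ne \emptyset$, so $\delta_2$ is satisfiable as well; hence both timely constraints meet the hypothesis of \cororef{canonical-form-order}. Applying that corollary with the roles $(\delta_1, \delta_2)$ gives $T(\delta_1) \subseteq T(\delta_2) \Rightarrow \hat{\delta}_1 \le \hat{\delta}_2$, and applying it with the roles swapped gives $T(\delta_2) \subseteq T(\delta_1) \Rightarrow \hat{\delta}_2 \le \hat{\delta}_1$. Together, $\hat{\delta}_1 = \hat{\delta}_2$.

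There is no real obstacle here; the only thing worth being careful about is that \cororef{canonical-form-order} is stated asymmetrically (it requires the ``source'' of the inclusion to be satisfiable), and so both applications above must be licensed. This is why the short observation that $T(\delta_1) = T(\delta_2)$ propagates satisfiability from one side to the other is the one non-automatic step of the argument.
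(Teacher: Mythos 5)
Your proposal is correct and follows essentially the same route as the paper: the forward direction via the equivalence property of \observationref{canonical-form-properties}, and the reverse direction by first propagating satisfiability from one constraint to the other (since $T(\delta_1)=T(\delta_2)\ne\emptyset$) and then applying \cororef{canonical-form-order} to both inclusions. No gaps.
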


\begin{proof}
$\Rightarrow$: 
Assume that $\hat{\delta}_1=\hat{\delta}_2$. By \observationref{canonical-form-properties} (equivalence),
we have $T(\delta_1)=T(\hat{\delta}_1)=T(\hat{\delta}_2)=T(\delta_2)$.

$\Leftarrow$:
Assume that $T(\delta_1) = T(\delta_2)$. Thus, since at least one of
$\delta_1,\delta_2$ is satisfiable, they both are.
To complete the proof, we apply \cororef{canonical-form-order} to
$T(\delta_1) \subseteq T(\delta_2)$ and to $T(\delta_2) \subseteq T(\delta_1)$.
\end{proof}

The above discussion gives rise to two alternative definitions (or rather,
characterisations) of the
canonical form of satisfiable functions.
The first one justifies
the name of the minimality property from
\observationref{canonical-form-properties}, and stems from this property when
combined with \cororef{canonical-form-uniqueness}.
The second one, which explicitly defines the inverse of the order-embedding
\mbox{$\hat{\delta}\mapsto T(\delta)$}, stems directly from
\lemmaref{canonical-form-bounds-attained} and from the definition of
satisfiability.
These definitions, both non-constructive in nature
(in contrast with \defnref{canonical-form}), showcase once more
the fact that the canonical form indeed emerges naturally, and that choosing
it to represent equivalence classes of timely constraints is not merely
an artifact of its being possible to constructively define and efficient to calculate.

\begin{cor}[Characterizations of the Canonical Form]\label{cor:canonical-form-characterisations}
Let $\implspec$ be a timely specification s.t.\ $\delta$ is satisfiable.
\begin{enumerate}
\item
$\hat{\delta} = \min\bigl\{\delta' \in \Delta^{(\distinctpairs{I})} \mid T(\delta')=T(\delta) \bigr\}$.\footnote{In particular, there exists a function in this set that is minimal
in every coordinate, however this may be directly proven by means of a simpler
argument.}
\item
$\forall i,j \in I: \hat{\delta}(i,j) = \sup\bigl\{\timpl(j)-\timpl(i) \mid \timpl \in T(\delta) \bigr\}$.\footnote{If this set is bounded from above,
then by \lemmaref{canonical-form-bounds-attained}, it attains its supremum.}
\end{enumerate}
\end{cor}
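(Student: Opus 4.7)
My plan is to prove the two characterisations independently, in each case bounding the right-hand expression above and below by $\hat{\delta}$, and leveraging primarily \observationref{canonical-form-properties}, \lemmaref{canonical-form-bounds-attained}, and \cororef{canonical-form-uniqueness}.

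For the first characterisation, I would first observe that $\hat{\delta}$ itself belongs to $\bigl\{\delta' \mid T(\delta')=T(\delta)\bigr\}$, by the equivalence property in \observationref{canonical-form-properties}. It then remains to show that $\hat{\delta}$ is a lower bound for this set. Fixing any $\delta'$ with $T(\delta')=T(\delta)$, satisfiability transfers from $\delta$ to $\delta'$, so by \cororef{canonical-form-uniqueness} we obtain $\widehat{\delta'}=\hat{\delta}$; combining this with the minimality property $\widehat{\delta'} \le \delta'$ from \observationref{canonical-form-properties} yields $\hat{\delta} \le \delta'$, as required.

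For the second characterisation, fix $i,j \in I$. For the bound $\sup\bigl\{\timpl(j)-\timpl(i) \mid \timpl \in T(\delta)\bigr\} \le \hat{\delta}(i,j)$, I would take any $\timpl \in T(\delta)=T(\hat{\delta})$ and apply the definition of satisfiability (with the case $i=j$ handled by $\hat{\delta}(i,i)=0$ from \observationref{canonical-form-properties}, since $\delta$ is satisfiable, both sides then vanishing) to conclude $\timpl(j)-\timpl(i) \le \hat{\delta}(i,j)$. For the reverse bound, I would split on whether $\hat{\delta}(i,j)$ is finite: if it is, the first part of \lemmaref{canonical-form-bounds-attained} furnishes a $\timpl \in T(\delta)$ attaining $\timpl(j)-\timpl(i)=\hat{\delta}(i,j)$; if it is infinite, the second part of the lemma yields, for every $K \in \timeset$, a $\timpl \in T(\delta)$ with $\timpl(j)-\timpl(i)\ge K$, forcing the supremum to be $\infty$.

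No step of either proof appears to be a genuine obstacle: the first characterisation reduces to a bookkeeping combination of uniqueness and minimality, and the second to an almost-immediate consequence of \lemmaref{canonical-form-bounds-attained}. The most delicate point is merely to remember to invoke the second part of \lemmaref{canonical-form-bounds-attained}, rather than its first part, when $\hat{\delta}(i,j)=\infty$, since in that case the supremum is infinite but not attained.
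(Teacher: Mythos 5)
Your proposal is correct and follows essentially the same route the paper indicates: part~1 from the minimality property $\widehat{\delta'}\le\delta'$ combined with \cororef{canonical-form-uniqueness}, and part~2 directly from \lemmaref{canonical-form-bounds-attained} together with the definition of satisfiability (and $\hat{\delta}(i,i)=0$ for the diagonal). Your write-up merely makes explicit the bookkeeping the paper leaves as a sketch, including the correct use of the second part of the lemma when $\hat{\delta}(i,j)=\infty$.
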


In fact, satisfiability of $\delta$ is not required in
\cororef{canonical-form-characterisations} if $G_{\delta}$ is
strongly connected and if $I$ is finite. Indeed, under these conditions, if $\delta$ is
unsatisfiable, then $\hat{\delta} \equiv -\infty$, which coincides with
both parts of this corollary when applied to any
unsatisfiable $\delta$.\footnote{When $G_{\delta}$ is
not strongly connected, then the coordinates $i$ for which
$\inf(\hat\delta|_{\{i\} \times I})$ assumes
the value of $-\infty$ (if $|I|<\infty$, these are the coordinates
for which $\hat{\delta}(i,i)=-\infty$) indicate the
connected component(s) of $G_{\delta}$ in which the ``reason(s)'' for unsatisfiability (e.g.\
the negative cycle(s), if $|I|<\infty$) lie(s).}
This suggests modifying the definition of the canonical
form of any unsatisfiable timely constraint to be $\hat{\delta} \eqdef -\infty$, in which
case Corollaries \ref{cor:canonical-form-order}, \ref{cor:canonical-form-uniqueness}
and \ref{cor:canonical-form-characterisations} apply to unsatisfiable timely constraints as well. As aesthetically-appealing
as such a definition may be, however, we note that it renders the canonical
form useless as a tool
for checking the solvability of a timely constraint (as the question of solvability
must be answered in order to compute the canonical form under this definition).
Indeed, in this case checking the solvability of a timely constraint
(and thus also computing its canonical form)
still involves computing $\hat{\delta}$ as it is defined in
\defnref{canonical-form}, and then applying the satisfiability
criterion from \lemmaref{satisfiable-iff} (and only then,
 if the timely constraint
turns out to be unsatisfiable, amending its canonical form to equal $-\infty$
across all coordinates).

\section{Discussion and Further Reading}
The problems discussed in this note seem analytic in
nature, perhaps more conventionally approached via geometric tools.
Nonetheless, as we have seen, these problems give way to natural
analysis by the somewhat unexpected use of graph theory, not only providing a
gamut of powerful and insightful theoretical results (regardless of the cardinality of $I$), but
also, when $I$ is finite, reducing the problem of satisfiability
of a timely constraint to that of negative-cycle detection, and the problems of finding minimal satisfying
functions and of classifying and comparing timely constraints --- to
all-pairs shortest-path computation (where finding a single coordinate of a
minimal satisfying function is reduced to single-source shortest-path computation).
This allows us to harness the vast existing knowledge regarding these
computational graph problems in order to efficiently analyse timely constraints.
For example, for a single timely constraint, all these problems may be
jointly solved via a single run of the Floyd-Warshall
algorithm~\cite{floyd-warshall}, in $O(|I|^3)$ time. Recently-discovered
all-pairs shortest-path algorithms, such as Han's~\cite{han},
provide even better asymptotic complexity.
In the special cases in which $\delta \ge 0$\footnote{In this case,
$\delta$ is always satisfiable, and finding a minimal satisfying function is
trivial (see \exref{nonnegative-delta});
however, classifying $\delta$ and comparing it to other timely constraints may
still be of interest.}
and/or $G_{\delta}$ is sparse (i.e.\
$\delta(i,j)=\infty$ for many $(i,j) \in \distinctpairs{I}$), other well-known
algorithms may be used to even further improve the running time;
the interested reader is referred to the notes concluding \cite[Chapter~25]{cormen}.

It should be noted that under many distributed models,
the graph $G_{\delta}$, associated with a timely constraint
$\delta$, plays an even more pivotal role in the study of timely
coordination than seen in this note.
For example, its strongly-connected components
are instrumental in characterising the communication channels 
(between the agents corresponding to the various actions) required for solving
timely-coordinated response problems associated with $\delta$. (For example,
all the coordination problems
introduced in \cite{halpern-moses-1990,fhmv-revisited,gomotark2013,bzm-wtr-ttr,bzm-ojr-gor} may readily be reformulated using an appropriate $\delta$ and analysed
in this way.)
For the details, which are beyond the scope of this note, the reader is
referred to~\cite{mymsc}.

\section{Acknowledgements}
The author would like to thank
Gil Kalai and Yoram Moses, the advisors of his M.Sc.\ thesis~\cite{mymsc},
upon chapter 5 thereof this note is based.


\message{\refname}
\pdfbookmark[1]{\refname}{references}
\def\bibsection{\section*{\refname}}
\bibliographystyle{model1-num-names}
\bibliography{satisfiability-and-canonisation}

\end{document}